\begin{document}
\newtheorem{thm}{Theorem}
\newtheorem{lem}{Lemma}
\newtheorem{prop}{Proposition}
\newtheorem{cor}{Corollary}
\newtheorem{con}{Conjecture}

\author{A. A. Taranenko}
\title{Positiveness of the permanent of 4-dimensional polystochastic matrices of order 4 \thanks{Sobolev Institute of Mathematics, Novosibirsk, Russia. Email: taa@math.nsc.ru. This work was funded by the Russian Science Foundation under grant 18-11-00136. The author is a Young Russian Mathematics award winner and would like to thank its sponsors and jury.}
}
\date{November 8, 2018}

\maketitle

\begin{abstract}
A nonnegative multidimensional matrix is called polystochastic if the sum of its entries over each line is equal to $1$. In this paper we overview known results on positiveness of the permanent of polystochastic matrices and prove that the permanent of every $4$-dimensional polystochastic matrix of order $4$ is greater than zero.

\bigskip
\textbf{Keywords:} permanent; polystochastic matrix; latin square; latin hypercube; transversal

\textbf{2010 MSC:}  15A15;  15B51; 05B15
\end{abstract}

\section{Introduction}

A \textit{$d$-dimensional matrix $A$ of order $n$} is an array $(a_\alpha)_{\alpha \in I^d_n}$, $a_\alpha \in\mathbb R$, where the set of indices $I_n^d= \left\{ (\alpha_1, \ldots , \alpha_d):\alpha_i \in \left\{0,\ldots,n-1 \right\}\right\}$. Given $k\in \left\{0,\ldots,d\right\}$, a \textit{$k$-dimensional plane} in $A$ is a submatrix obtained by fixing $d-k$ indices and letting the other $k$ indices vary from 1 to $n$.   A 1-dimensional plane is said to be a \textit{line}, and a $(d-1)$-dimensional plane is a \textit{hyperplane}.

A matrix $A$ is called a \textit{$(0,1)$-matrix} if all its entries are equal to 0 or 1, and $A$ is a \textit{nonnegative} matrix if for all $\alpha \in I_n^d$ we have $a_\alpha \geq 0$. A nonnegative matrix is \textit{polystochastic} if for each line $l$ it holds $\sum\limits_{\alpha \in l} a_\alpha = 1$. 2-dimensional polystochastic matrices are known as \textit{doubly stochastic} matrices.

A \textit{partial diagonal $p$ of length $m$} in a $d$-dimensional matrix $A$ of order $n$ is a set $\{\alpha^1, \ldots, \alpha^m\}$ of $m$ indices  such that each pair of indices $\alpha^i$ and $\alpha^j$ is distinct in all components. A partial diagonal $p$ is  \textit{positive} in a matrix $A$ if  all entries of $A$ with indices from $p$ are greater than zero.

A \textit{diagonal} in a $d$-dimensional matrix  $A$ of order $n$ is a partial diagonal of length $n$ (the maximal possible length). Denote by $D(A)$  the set of all diagonals in $A$. The \textit{permanent} of a multidimensional matrix $A$ is
$${\rm per} A = \sum\limits_{p \in D(A)} \prod\limits_{\alpha \in p} a_{\alpha}.$$

The permanent of polystochastic matrices is applied for counting transversals in latin squares and hypercubes. A \textit{$d$-dimensional latin hypercube $Q$ of order $n$} is a multidimensional matrix filled by $n$ symbols so that each line contains all different symbols. $2$-dimensional latin hypercubes are usually called \textit{latin squares}. Two latin hypercubes are said to be \textit{equivalent} if one can be put to another by permutations of hyperplanes and by permutations of symbols. A \textit{transversal} in a latin hypercube $Q$ is a diagonal containing all $n$ symbols.

There is a one-to-one correspondence between $d$-dimensional latin hypercubes $Q$ of order $n$ and $(d+1)$-dimensional polystochastic $(0,1)$-matrices $A$ of order $n$: an entry $q_{\alpha_1, \ldots, \alpha_d}$ of $Q$ equals $\alpha_{d+1}$ if and only if an entry $a_{\alpha_1, \ldots, \alpha_{d+1}}$ of $A$ equals $1$. The number of transversals in a latin hypercube $Q$ coincides with the permanent of the corresponding polystochastic matrix $A$. For the first time this correspondence was observed in~\cite{jurkat}.

The main aim of this paper is to put together all recent results on positiveness of the permanent of polystochastic matrices and prove that the permanent of all polystochastic matrices of order and dimension $4$ is positive.

\section{History and motivation}

We start our overview with the well-known Birkhoff theorem stating that every doubly stochastic matrix not only has a positive permanent but can be decomposed into a convex combination of permutation matrices. 
\begin{thm}[Birkhoff]
Let $A$ be a doubly stochastic matrix of order $n$. Then ${\rm per} A > 0$ and moreover
$A = \sum \limits_{i=1}^k \theta_i P_i,$
where $P_1, \ldots, P_k$ are permutation matrices,  $\theta_1, \ldots, \theta_k$ are nonnegative, and $\sum \limits_{i=1}^k \theta_i = 1.$ 
\end{thm}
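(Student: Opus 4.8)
The plan is to prove both assertions simultaneously by induction on the number of nonzero entries of $A$. The engine of the argument is the classical fact that every doubly stochastic matrix possesses a \emph{positive diagonal}, i.e.\ a permutation $\sigma$ of $\{0,\ldots,n-1\}$ with $a_{i\sigma(i)}>0$ for all $i$. This alone settles ${\rm per} A>0$, because ${\rm per} A$ is a sum of nonnegative products over all diagonals and the term indexed by $\sigma$ is strictly positive.

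To produce the positive diagonal I would apply Hall's marriage theorem (equivalently the König--Egerváry theorem) to the bipartite graph $G$ whose parts are the row indices and the column indices of $A$, with $i$ joined to $j$ exactly when $a_{ij}>0$. For a set $S$ of rows let $N(S)$ be the set of columns adjacent to $S$. The entries of $A$ lying in rows of $S$ sum to $|S|$, since each such row contributes $1$, and all of these entries lie in columns belonging to $N(S)$; as every column of $A$ sums to $1$, this forces $|N(S)|\ge |S|$. Hall's condition thus holds, $G$ admits a perfect matching, and this matching is the sought permutation $\sigma$.

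For the decomposition, set $\theta_1=\min_i a_{i\sigma(i)}>0$ and let $P_1$ be the permutation matrix of $\sigma$. If $\theta_1=1$ then $A=P_1$ and we are done. Otherwise $A'=\frac{1}{1-\theta_1}\left(A-\theta_1 P_1\right)$ is nonnegative and each of its lines sums to $1$, so $A'$ is doubly stochastic; moreover the entry of $A$ achieving the minimum $\theta_1$ becomes zero in $A-\theta_1 P_1$, while no positive entry is created, so $A'$ has strictly fewer nonzero entries than $A$. By the induction hypothesis $A'=\sum_{i=2}^{k}\mu_i P_i$ with $P_i$ permutation matrices, $\mu_i\ge 0$, $\sum_{i=2}^{k}\mu_i=1$; taking $\theta_i=(1-\theta_1)\mu_i$ for $i\ge 2$ yields $A=\sum_{i=1}^{k}\theta_i P_i$ with nonnegative weights summing to $1$.

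I expect the only point requiring care to be the verification of Hall's condition, and in particular applying the mass-counting argument to the correct side of the bipartition; the remainder is bookkeeping around the induction on the number of nonzero entries, whose termination is clear since that number is a strictly decreasing positive integer. As a fallback I would keep in mind the polytope-theoretic route: the set of doubly stochastic matrices is compact and convex, its extreme points are precisely the permutation matrices (seen by perturbing along an alternating cycle through the entries lying strictly between $0$ and $1$), and the Minkowski/Krein--Milman theorem then gives the convex representation; but the inductive argument above is more elementary and self-contained.
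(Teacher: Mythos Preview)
Your argument is correct: the Hall condition is verified by the mass-counting you describe, the resulting positive diagonal gives ${\rm per}\,A>0$, and the induction on the number of nonzero entries (peeling off $\theta_1 P_1$ with $\theta_1=\min_i a_{i\sigma(i)}$) yields the convex decomposition. One small point you might make explicit is the base case, or equivalently why $\theta_1=1$ forces $A=P_1$: since each $a_{i\sigma(i)}\ge 1$ while the $i$-th row sums to $1$ and all entries are nonnegative, every other entry of that row must vanish.

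There is, however, nothing to compare against: the paper states Birkhoff's theorem as a classical background result in the historical overview and does \emph{not} supply a proof. Your write-up is the standard textbook proof and would be entirely appropriate if a proof were required.
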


Meanwhile, for dimensions $d$ greater than $2$ there exist $d$-dimensional po\-ly\-sto\-chas\-tic matrices with zero permanent. The simplest example is a $3$-di\-men\-si\-onal $(0,1)$-matrix corresponding to the Cayley table of a group $\mathbb{Z}_n$ of even order $n$. The fact that the Cayley tables of such groups have no transversals was proved by Euler~\cite{euler}. For latin hypercubes this observation was generalized by Wanless that gives us the following construction of polystochastic matrices $Z_n^d$ with a zero permanent.

\begin{prop}[Wanless,~\cite{wanless}]
Let $Z^{d+1}_{n}$ be the $(d+1)$-dimensional polystochastic $(0,1)$-matrix of order $n$ such that $z_\alpha = 1$ if and only if $\alpha_1 + \ldots + \alpha_{d+1} \equiv 0 \mod n$ and let $Q_n^d$ be a $d$-dimensional latin hypercube corresponding to this matrix. 
If $d$ and $n$ are even then the latin hypercube $Q_n^d$ has no transversals.
\end{prop}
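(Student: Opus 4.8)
The plan is to argue by contradiction via a summation identity modulo $n$. Suppose that $Q_n^d$ has a transversal. Under the correspondence between latin hypercubes and polystochastic $(0,1)$-matrices, a transversal of $Q_n^d$ is the same as a diagonal $p=\{\beta^1,\dots,\beta^n\}\subseteq I_n^{d+1}$ of the matrix $Z_n^{d+1}$ with $z_{\beta^j}=1$ for every $j$ (a diagonal has length $n$, so the product of its entries is $1$ exactly when all of them equal $1$); equivalently, the indices $\beta^1,\dots,\beta^n$ are pairwise distinct in all $d+1$ components and satisfy $\beta^j_1+\dots+\beta^j_{d+1}\equiv 0\pmod n$ for each $j\in\{1,\dots,n\}$.

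First I would record the consequence of pairwise distinctness: for every component $i\in\{1,\dots,d+1\}$ the numbers $\beta^1_i,\dots,\beta^n_i$ are distinct elements of $\{0,\dots,n-1\}$, hence run over all of $\{0,\dots,n-1\}$, so $\sum_{j=1}^n\beta^j_i=0+1+\dots+(n-1)=\tfrac{n(n-1)}{2}$.

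The key step is to sum the $n$ congruences $\beta^j_1+\dots+\beta^j_{d+1}\equiv 0\pmod n$ over $j$ and interchange the order of summation, which gives $(d+1)\cdot\tfrac{n(n-1)}{2}\equiv 0\pmod n$. It then remains to evaluate this left-hand side modulo $n$ using that both $n$ and $d$ are even. Writing $n=2m$, one has $\tfrac{n(n-1)}{2}=m(n-1)\equiv -m\equiv m\pmod n$, and since $d+1$ is odd this yields $(d+1)\cdot\tfrac{n(n-1)}{2}\equiv m\cdot(d+1)\equiv m\pmod n$. Hence $m=\tfrac n2\equiv 0\pmod n$, which is false for $n\geq 2$. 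This contradiction proves that $Q_n^d$ has no transversal.

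The argument involves no real difficulty; the only place that needs a little attention is the elementary modular reduction $\tfrac{n(n-1)}{2}\equiv \tfrac n2\pmod n$, valid precisely because $n$ is even, together with the observation that multiplication by the odd integer $d+1$ preserves this residue.
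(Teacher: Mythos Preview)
Your argument is correct and is exactly the classical parity/summation proof: translate a transversal into a positive diagonal of $Z_n^{d+1}$, use that each coordinate of a diagonal runs over $\{0,\dots,n-1\}$, sum the defining congruences, and derive the contradiction $(d+1)\tfrac{n(n-1)}{2}\equiv 0\pmod n$ which fails when $n$ and $d$ are both even. The modular reduction $(d+1)\cdot\tfrac{n(n-1)}{2}\equiv \tfrac{n}{2}\pmod n$ is handled cleanly.

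Note, however, that the paper does not supply its own proof of this proposition: it is stated with attribution to Wanless~\cite{wanless} and used as background. So there is no in-paper argument to compare against; your write-up is precisely the standard proof one finds in the cited source (and which, in the $d=2$ case, goes back to Euler).
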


There are no known examples of latin squares of odd order with no transversals, and in 1967 Ryser conjectured the following.

\begin{con}[Ryser,~\cite{ryser}]
All latin squares of odd order have a transversal.
\end{con}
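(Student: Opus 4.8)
Ryser's conjecture is one of the oldest open problems on latin squares, so what follows is a research strategy rather than a proof. The first move is to recast the statement in the language of this paper: by the correspondence between latin squares of order $n$ and $3$-dimensional polystochastic $(0,1)$-matrices of order $n$, together with the identity between the number of transversals and the permanent, the conjecture is equivalent to the assertion that ${\rm per}\, A > 0$ for every $3$-dimensional polystochastic $(0,1)$-matrix $A$ of odd order $n$. The plan is then to aim for the structurally cleaner strengthening that ${\rm per}\, A > 0$ for \emph{every} $3$-dimensional polystochastic matrix of odd order: the set of such matrices is a convex polytope containing all $(0,1)$ representatives, so any positive lower bound on the permanent over this polytope would settle the conjecture. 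The matrices $Z_n^3$ of Wanless's proposition, which have permanent $0$ for even $n$, make clear that the hypothesis of odd $n$ must enter the argument in an essential way.

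I see two natural routes. The first is a van der Waerden-type bound: prove ${\rm per}\, A \ge c(n) > 0$ uniformly over $3$-dimensional polystochastic matrices of odd order $n$, in the spirit of the Falikman--Egorychev bound ${\rm per}\, A \ge n!/n^n$ for doubly stochastic matrices. First I would take a matrix minimizing the permanent over the polytope and attempt a variational argument at the minimizer; but unlike the $d=2$ case the minimum value really is $0$ for even $n$, so the parity of $n$ has to be injected into the analysis, and the mixed-volume and convexity machinery that powers the $d=2$ proof does not survive in dimension $3$. The second route is modular: prove the refinement of Ryser's conjecture asserting that the number of transversals of a latin square of odd order is \emph{odd}, in parallel with Balasubramanian's theorem that this number is even when $n$ is even. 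Here I would expand ${\rm per}\, A \bmod 2$ by a Glynn-type summation formula for the multidimensional permanent, or by inclusion--exclusion over hyperplanes, and try to use the defining property of a latin square --- each symbol once in every row and column --- to pin the residue to $1$. The appeal of this route is that it is intrinsically sensitive to the parity of $n$.

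The hard part, on either route, is precisely the step at which oddness is supposed to do its work. Every soft technique --- convexity, entropy, the probabilistic absorption method, greedy augmentation of a partial diagonal --- is blind to the parity of $n$, and so cannot separate the genuine even-order matrices $Z_n^d$ with zero permanent from the conjecturally transversal-rich odd-order matrices; at best such methods produce partial diagonals of length almost $n$ or settle only sufficiently large $n$. A proof will have to introduce a genuinely parity-aware object: an analogue of Birkhoff's decomposition tailored to dimension $3$ and odd order, or an algebraic certificate for positivity (respectively, for the value modulo $2$) that detects the parity of the order. Producing such an object is the crux. It is worth noting, finally, that the case-analytic methods behind the order-$4$, dimension-$4$ theorem of this paper exploit the smallness of the order rather than any parity phenomenon, and so do not point toward Ryser's conjecture directly.
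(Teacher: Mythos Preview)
The statement is Ryser's conjecture, which the paper presents as an open problem and does not prove; there is therefore no proof in the paper to compare your proposal against. You correctly recognize this and offer a research strategy rather than a proof, which is the appropriate response here. Your reformulation in terms of the permanent of $3$-dimensional polystochastic $(0,1)$-matrices matches exactly how the paper frames the conjecture, and your remarks about the difficulty of injecting the parity of $n$ into any argument are well taken. There is no gap to name, since you make no claim to have proved anything.
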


This conjecture is related to the conjecture of Stein~\cite{stein} and Brualdi~\cite{brualdi} claiming that every latin square of order $n$ has a partial transversal of length $n-1$. 
Both conjectures have attracted a lot of attention and motivated a number of researchers in last years (see, e.g., the recent works~\cite{aharoni,keevtrans,pokrovskiy} and survey~\cite{wanless} for some history). The Ryser's conjecture is equivalent to that all 3-dimensional polystochastic $(0,1)$-matrices have a positive permanent.

In~\cite{sun} Sun proved that latin hypercubes corresponding to even-dimensional matrices  $Z^d_n$ have a transversal, and so all such matrices have a positive permanent. He also proposed that all $4$-dimensional polystochastic $(0,1)$-matrices have a permanent greater than zero.

\begin{con}[Sun,~\cite{sun}]
Every $3$-dimensional latin hypercube has a transversal.
\end{con}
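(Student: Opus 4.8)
The plan is to recast the statement in the permanent language of Section~2 and then to attack the resulting extremal problem. By the stated one-to-one correspondence, a $3$-dimensional latin hypercube $Q$ of order $n$ has a transversal precisely when the associated $4$-dimensional polystochastic $(0,1)$-matrix $A$ satisfies ${\rm per}\,A>0$. Taking the first coordinate of a diagonal as a running index, a transversal of $Q$ is nothing but a pair of permutations $\pi,\rho$ of $\{0,\ldots,n-1\}$ for which $t\mapsto Q(t,\pi(t),\rho(t))$ is again a permutation (the fourth coordinate is then forced). Thus the entire task is to exhibit two permutations making this composite map bijective. Before choosing a method I would note that no van der Waerden type lower bound can help, since the matrices $Z_n^{d}$ of Proposition~1 have permanent $0$ in higher dimensions; any proof must therefore use that $A$ is a $(0,1)$-matrix and that the hypercube dimension $3$ is odd, as for even dimension and even order the statement is false.

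As a warm-up I would first settle the group-based cube $Q_n^{3}$ (corresponding to $Z_n^{4}$) directly. Here one needs permutations $\pi,\rho$ with $t\mapsto t+\pi(t)+\rho(t)$ bijective modulo $n$; taking $\pi$ the identity and $\rho$ the multiplication by a unit $a$ reduces this to finding $a$ with $\gcd(a,n)=\gcd(a+2,n)=1$, which the Chinese remainder theorem always provides. This case is subsumed by Sun's theorem, but re-deriving it confirms that in odd hypercube dimension the most symmetric family always carries a transversal, in sharp contrast with the even-dimensional obstruction, and it supports the conjecture in full.

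For a general $A$ I would pursue two parallel routes. The first is a sign-counting argument in the spirit of Alon--Tarsi and the Combinatorial Nullstellensatz: encode the search for $(\pi,\rho)$ as the nonvanishing of a suitable coefficient of a polynomial built from the symbol differences of $Q$ and try to show that the signed sum $\sum_{\pi,\rho}\mathrm{sgn}(\pi)\,\mathrm{sgn}(\rho)$ over the admissible pairs is nonzero for one parity class of $n$. The second route is inductive and leans on Birkhoff's theorem: summing $A$ over two coordinates yields, after normalization, a doubly stochastic marginal whose positive permanent locates a promising partial structure; combined with the methods that give partial transversals of length $n-1$ in latin squares, this should produce an almost complete diagonal. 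Neither route controls all three distinctness constraints at once, which is exactly where the difficulty lies.

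The hard part will be the final passage from a partial transversal of length $n-1$ to a full one of length $n$. This is the same obstruction that keeps the Ryser and Brualdi conjectures open in two dimensions, and in three dimensions it is aggravated by the three simultaneous distinctness requirements on the first, second, and fourth coordinates, which prevent one from freezing a single permutation and reducing to an ordinary latin square. For small orders the obstacle can be sidestepped by classifying the hypercubes up to the equivalence generated by permutations of hyperplanes and of symbols and then checking the finitely many representatives, which is the route used in this paper for order $4$; a proof valid for all $n$ will require a genuinely new idea to close that last coordinate uniformly.
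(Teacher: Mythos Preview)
The statement you are attempting to prove is Sun's \emph{conjecture}; it is listed in the paper as an open problem, and the paper contains no proof of it. There is therefore nothing to compare your proposal against. What the paper does prove is Theorem~\ref{poly44}, which settles the single case $n=4$ (and in fact for all polystochastic matrices, not only $(0,1)$-matrices); for general $n$ the conjecture remains open, as does the Ryser conjecture that it generalizes.

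Your proposal is not a proof either, and you say so yourself: the Alon--Tarsi/Nullstellensatz route and the Birkhoff-marginal route are both labelled as not controlling all three distinctness constraints, and the final paragraph concedes that closing the gap from a partial transversal of length $n-1$ to $n$ ``will require a genuinely new idea''. So what you have written is a survey of obstructions rather than an argument. That is an honest assessment of the state of the problem, but it should not be presented as a proof attempt.

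One factual correction about the paper's method: you describe the order-$4$ case as handled ``by classifying the hypercubes up to equivalence\ldots and then checking the finitely many representatives''. That is how Proposition~2 (the computer verifications of McKay--Wanless) was obtained, but it is not how Theorem~\ref{poly44} is proved. The proof of Theorem~\ref{poly44} is a short direct argument: one assumes ${\rm per}\,A=0$, uses Birkhoff's theorem inside several $2$-dimensional planes to force positive entries, invokes the uniqueness of the transversal-free $4\times 3$ row-latin rectangle (Lemma~\ref{rectangle}) to pin down a specific pattern of zeros, and then derives a contradiction with polystochasticity. No enumeration of hypercubes is involved, and the argument applies to arbitrary polystochastic matrices, not just $(0,1)$-matrices.
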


In~\cite{wantrans} McKay, McLeod, and Wanless and in~\cite{cencus} McKay and Wanless looked through all latin squares and latin hypercubes of small orders and dimensions. Counting transversals in all of them yields the following.

\begin{prop}
\begin{itemize}
\item Every latin square of odd order $n \leq 9$ has a transversal. 
\item Every $3$-dimensional latin hypercube of order $n \leq 6$ has a transversal. 
\item Except for latin hypercubes corresponding to matrices $Z^5_2$ and $Z^5_4$, all $4$-dimensional latin hypercubes of order $n \leq 5$ have a transversal. 
\item Every $5$-dimensional latin hypercube of order $n \leq 5$ has a transversal. 
\end{itemize}
\end{prop}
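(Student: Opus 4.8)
The statement collects four facts of a purely enumerative nature, so the natural proof is an exhaustive computer search over the relevant latin hypercubes; this is exactly what is carried out in \cite{wantrans} and \cite{cencus}. The first observation that makes such a search feasible is that the number of transversals is invariant under the equivalence of latin hypercubes. Passing to the corresponding $(d+1)$-dimensional polystochastic $(0,1)$-matrix $A$, a permutation of the hyperplanes of $Q$ and a permutation of its symbols both act as permutations of the levels of $A$ along single coordinates, and each such permutation maps diagonals to diagonals while merely permuting the factors of every product $\prod_{\alpha\in p}a_\alpha$; hence ${\rm per}\,A$, and therefore the number of transversals, is constant on each equivalence class. In fact ${\rm per}\,A$ is also unchanged by any permutation of the $d+1$ coordinate axes, since such a permutation is a bijection of $I_n^{d+1}$ carrying the set of diagonals onto itself. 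Consequently it suffices to process one representative of each main (paratopy) class, a coarser partition than the equivalence defined above, and this is the grouping used in the cited censuses.

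The proof then splits into the four regimes of the statement: latin squares of odd order $n\le 9$; $3$-dimensional latin hypercubes of order $n\le 6$; $4$-dimensional latin hypercubes of order $n\le 5$; and $5$-dimensional latin hypercubes of order $n\le 5$. For each regime one generates a complete and irredundant list of main-class representatives by isomorph-free exhaustive generation, building the hypercube hyperplane by hyperplane and rejecting every partial object that is not in canonical form under the paratopy group, so that exactly one representative of each class survives. This is the standard backtracking-with-canonical-augmentation scheme of the cited enumerations, and its correctness rests on the completeness of the generation (every class is reached) together with the irredundancy guaranteed by the canonical-form test.

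With the representatives in hand, for each one I would compute the permanent of the associated polystochastic $(0,1)$-matrix, that is, count its transversals, by a direct backtracking search over diagonals (or by inclusion--exclusion), and then check that this count is positive. In every regime except the third this yields a positive value for all representatives. In the third regime the only classes with vanishing permanent turn out to be those of $Z^5_2$ and $Z^5_4$: these correspond to $4$-dimensional hypercubes with $d=4$ and $n\in\{2,4\}$ both even, so the proposition of Wanless already guarantees that they have no transversal, and the exhaustive search certifies that these are the \emph{only} exceptions of order $n\le 5$.

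The main obstacle is entirely one of scale and of trusting the completeness of the enumeration, rather than one of mathematical depth. The number of classes grows explosively --- for latin squares of order $9$ alone there are of the order of $10^{10}$ main classes --- so a naive generation is hopeless and the whole argument hinges on an efficient, provably complete isomorph-rejection procedure with a fast canonical-form test under the paratopy group. A secondary difficulty is certifying the transversal counts themselves: since the conclusion is a positivity statement it is enough to exhibit a single transversal for each non-exceptional representative and to confirm that none exists for $Z^5_2$ and $Z^5_4$, but to be confident in the negative cases one wants the counts produced by two independent implementations to agree.
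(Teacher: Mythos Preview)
Your proposal is correct and matches the paper's treatment: the paper does not give an independent proof of this proposition at all, but simply attributes the result to the exhaustive computer enumerations of McKay, McLeod, and Wanless~\cite{wantrans} and McKay and Wanless~\cite{cencus}, exactly as you describe. Your discussion of why one may work up to main classes and how the isomorph-free generation and transversal search are carried out is a reasonable elaboration of what those references do, and is more detailed than anything in the paper itself.
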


On the basis of these results, Wanless put forward the following conjecture.
\begin{con}[Wanless,~\cite{wanless}] \label{hyp01}
Every latin hypercube of odd order or odd dimension has a transversal.
\end{con}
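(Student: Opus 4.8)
The plan is to translate the conjecture, through the correspondence recalled above, into a statement about permanents: a $d$-dimensional latin hypercube $Q$ of order $n$ has a transversal precisely when the associated $(d+1)$-dimensional polystochastic $(0,1)$-matrix $A$ satisfies $\mathrm{per}\, A > 0$. Here odd order of $Q$ means $n$ odd, while odd dimension of $Q$ means $d$ odd, i.e. the matrix $A$ has even dimension $d+1$. Since Wanless's family $Z^{d+1}_n$ produces matrices with zero permanent only when both $d$ and $n$ are even, the conjecture asserts that this parity coincidence is the sole obstruction to a positive permanent. Accordingly I would split the argument into the two regimes ``$n$ odd'' and ``$d$ odd'' and attack each by a signed-enumeration (parity) argument, since an unsigned count alone cannot separate the obstructed case from the rest.

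For the odd-order regime I would attempt the Combinatorial Nullstellensatz route. One introduces variables attached to the $n$ symbols (equivalently to the $n$ positions of the last coordinate of $A$) and builds a multilinear polynomial $P$ over a suitable field whose nonvanishing at a $0/1$-assignment encodes a selection of $n$ cells forming a transversal. A transversal then exists provided a specific top-degree coefficient of $P$ is nonzero, and that coefficient is a signed count of diagonals, a higher-dimensional analogue of the Alon--Tarsi invariant. The goal is to prove this signed sum is nonzero using the hypothesis that $n$ is odd, so that the pairing implicit in the alternating signs cannot cancel everything. For $d=2$ this is exactly Ryser's conjecture, so this regime already contains a celebrated open problem; the realistic target is therefore to reduce odd-order positivity to nonvanishing of the corresponding Alon--Tarsi-type signed sum, rather than to settle it unconditionally.

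For the odd-dimension regime I would pursue a direct parity argument, which is the more tractable half and the one subsuming Sun's conjecture on $3$-dimensional hypercubes. Here $A$ has even dimension $d+1$, and the symmetry of the ``all coordinates distinct'' condition across the $d+1$ directions can be exploited to define an involution on the set of diagonals, or on suitably signed diagonals. I would try to show that when $d$ is odd this involution has an odd number of fixed points, forcing the number of transversals to be odd and hence positive, the even-dimensional obstruction $Z^{d+1}_n$ being precisely the situation where the analogous fixed-point count comes out even. Sun's verification that the members $Z^{d+1}_n$ of even matrix dimension have transversals is the model computation to generalise, while the present paper's order-$4$ dimension-$4$ result and the small-case enumeration quoted above supply base evidence that such an involution exists.

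The main obstacle, in both regimes, is establishing nonvanishing of the relevant signed invariant in full generality: for odd order this is an Alon--Tarsi-type statement open already for latin squares, and for odd dimension it requires controlling the fixed-point parity of the proposed involution for \emph{arbitrary} hypercubes, not merely for the algebraic family $Z^{d+1}_n$. I therefore expect a complete proof to lie beyond current techniques, and that the achievable contributions are (i) an unconditional result in the odd-dimension regime via the involution method, contingent on pinning down the correct sign, and (ii) a clean reduction of the odd-order regime to a higher-dimensional Alon--Tarsi conjecture, a reduction consistent with the computational evidence above and with the positive cases already settled by Sun for the family $Z^{d+1}_n$ and verified for small orders and dimensions.
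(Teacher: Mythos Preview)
The statement you are attempting is not a theorem in the paper but an open conjecture; the paper records it as Conjecture~\ref{hyp01} and offers no proof, precisely because none is known. Indeed, the paper explicitly notes that it subsumes Ryser's conjecture (the $d=2$ case of the odd-order regime), which remains open, and Sun's conjecture (the $d=3$ case of the odd-dimension regime), which is likewise open beyond the small orders checked by computer. So there is no ``paper's own proof'' to compare your proposal against.

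Your write-up is in fact consistent with this: you correctly identify that the odd-order half contains Ryser's conjecture and therefore cannot be settled by current methods, and you concede at the end that a complete proof ``lies beyond current techniques.'' What you have produced is a reasonable research outline (Combinatorial Nullstellensatz / Alon--Tarsi for odd order, a hoped-for parity involution for odd dimension), not a proof. The one substantive caution I would add is that your odd-dimension plan is more speculative than you suggest: no involution with the required fixed-point parity is known for arbitrary latin hypercubes, and Sun's result for the algebraic family $Z_n^{d+1}$ does not obviously generalise, since that family has a group structure that a generic hypercube lacks. Treat both halves as genuinely open rather than as one tractable and one hard.
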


This conjecture generalizes the Ryser's and the Sun's conjectures, and the following conjecture, in turn, generalizes all of them.

\begin{con}[Taranenko,~\cite{myobz}] \label{mainhyp}
The permanent of every polystochastic matrix of odd order or even dimension is greater than zero.
\end{con}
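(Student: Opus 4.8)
The plan is to peel the statement down to its combinatorial core and then attack that core in two parallel regimes. Since $\mathrm{per}\,A$ is a sum of products of entries over diagonals, one has $\mathrm{per}\,A>0$ if and only if the support of $A$ contains a diagonal, so the claim reduces to a purely combinatorial question about which supports of polystochastic matrices contain a diagonal. Enlarging the support can only help, so the extremal (hardest) supports are the minimal line-covers, which are exactly the permutation-hypermatrix supports, i.e. the $(0,1)$ polystochastic matrices, equivalently the latin hypercubes. I would then try to reduce Conjecture~\ref{mainhyp} to Conjecture~\ref{hyp01} by showing that the support of every polystochastic matrix of odd order or even dimension contains that of a latin hypercube possessing a transversal. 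This step is itself a genuine difficulty: the multidimensional analogue of Birkhoff's theorem fails, the polytope of polystochastic matrices has fractional vertices for $d\ge 3$, and so one may instead be forced to argue about fractional supports directly rather than passing to a single latin hypercube.

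Granting that reduction, the task becomes showing that every latin hypercube of odd order or of odd dimension has a transversal, and here the two hypotheses call for different machinery. For the even-dimension case (odd-dimensional hypercubes) I would pursue an algebraic/parity argument generalizing Sun's treatment of $Z_n^d$. The known zero-permanent examples $Z_n^{d+1}$ require both $n$ and $d$ even, and the obstruction is of ``sign'' type, namely the Euler obstruction $\sum_{g}g\neq 0$ in $\mathbb{Z}_{2m}$ and its hypercube analogue. The idea is to count transversals modulo $2$, or to apply the Combinatorial Nullstellensatz to a polynomial whose nonvanishing encodes a transversal, and to use the even dimension to force the relevant coefficient or sign sum to be nonzero.

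For the odd-order case the natural route is a van der Waerden-type lower bound. For $d=2$ the Egorychev--Falikman theorem gives $\mathrm{per}\,A\ge n!/n^n>0$; although the direct multidimensional analogue is false, since the permanent can vanish, one may hope that for odd $n$ a positive lower bound $\mathrm{per}\,A\ge c(n,d)>0$ survives, provable either by a permanent-inequality or entropy argument or by the polynomial method. The odd order is essential: it is exactly what excludes the parity obstruction and makes a Ryser-type existence statement plausible in the first place.

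The main obstacle is unavoidable and should be stated plainly. The odd-order case specialises, at $d=2$, to Ryser's conjecture, open since 1967; hence no complete proof of Conjecture~\ref{mainhyp} can avoid resolving Ryser's conjecture, and I do not expect a short unconditional argument. A realistic program is therefore to (i) push the even-dimension case as far as possible by combining the parity/Nullstellensatz method for latin hypercubes with the bridge from $(0,1)$ to arbitrary polystochastic matrices, since that regime sidesteps Ryser entirely, and (ii) treat the odd-order case in bounded dimension or under structural hypotheses, leaving the general odd-order statement as the decisive hard kernel. The present paper carries out the bridge step of (i) in the single case $d=n=4$: there the $(0,1)$ case is already settled by the census above, and the new content is precisely the extension to all polystochastic matrices of that order and dimension.
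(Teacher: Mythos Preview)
The statement labeled \texttt{mainhyp} is a \emph{conjecture}, not a theorem; the paper does not prove it and does not claim to. There is therefore no ``paper's own proof'' to compare against. What the paper actually proves is the single special case $d=n=4$ (Theorem~\ref{poly44}), and it records the conjecture as open in all other nontrivial cases.

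Your write-up is not a proof either, and you say so yourself: you correctly observe that the odd-order regime already contains Ryser's conjecture (latin squares of odd order, i.e.\ $3$-dimensional polystochastic $(0,1)$-matrices of odd order), so no complete argument is to be expected. As a research outline your sketch is reasonable, but two points deserve correction. First, your proposed ``bridge'' --- reducing an arbitrary polystochastic matrix to a $(0,1)$ one whose support it contains --- is exactly where the multidimensional Birkhoff theorem fails, and you should not expect this reduction to go through in general; indeed the paper's proof of the case $d=n=4$ does \emph{not} proceed by reducing to the $(0,1)$ case at all, but argues directly with doubly stochastic $2$-dimensional planes of $A$ and a small case analysis (Lemmas~\ref{rectangle} and~\ref{partdiag}). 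Second, your closing sentence mischaracterises the paper: the new content is not an ``extension from the $(0,1)$ case'' but an independent, self-contained argument that never invokes the previously known $(0,1)$ result.
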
 

For multidimensional matrices of small orders this conjecture is confirmed by the author.

\begin{thm}
\begin{itemize}
\item Except for matrices $Z_2^d$ of odd dimensions, all polystochastic matrices of order $2$ have a positive permanent. (Taranenko,~\cite{myobz})
\item All polystochastic matrices of order $3$ have a positive permanent. (Taranenko,~\cite{myobz})
\item Except for matrices $Z_4^d$ of odd dimensions, all polystochastic $(0,1)$-matrices of order $4$ have a positive permanent. (Taranenko,~\cite{myquasi})
\end{itemize}
\end{thm}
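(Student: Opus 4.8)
The plan is to prove the three items by separate arguments, in each case taking Birkhoff's theorem as the base case, and throughout restricting to dimension $d \geq 2$, where the statements are non-degenerate (for $d=1$ an order-$n$ matrix is a single probability vector whose permanent is the product of its entries).

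\emph{Order 2.} Here I would argue directly. For an order-$2$ matrix the line condition $a_\alpha + a_{\alpha \oplus e_i} = 1$, where $\oplus e_i$ flips the $i$-th coordinate, forces $a_{\alpha \oplus e_i} = 1 - a_\alpha$ for every $i$; hence flipping an even number of coordinates preserves the entry and flipping an odd number sends it to its complement. Consequently $a_\alpha$ depends only on the parity of $\alpha_1 + \cdots + \alpha_d$: there is a single parameter $x \in [0,1]$ with $a_\alpha = x$ when this sum is even and $a_\alpha = 1-x$ when it is odd. A diagonal of an order-$2$ matrix is exactly a complementary pair $\{\alpha, \bar\alpha\}$ with $\bar\alpha = \alpha \oplus (1,\dots,1)$, and since complementation flips $d$ coordinates it preserves parity iff $d$ is even. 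A direct count of the $2^{d-1}$ pairs then gives
$${\rm per}\, A = 2^{d-2}\bigl(x^2 + (1-x)^2\bigr)\ (d\text{ even}), \qquad {\rm per}\, A = 2^{d-1}\,x(1-x)\ (d\text{ odd}).$$
The first expression is always positive; the second vanishes only at $x \in \{0,1\}$, which are precisely $Z_2^d$ and its image under a transposition of two parallel hyperplanes (both equivalent to $Z_2^d$). This yields item~1.

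\emph{Order 3.} I would induct on $d$, the base $d=2$ being Birkhoff's theorem. For the step, slice $A$ along the last coordinate into three $(d-1)$-dimensional matrices $A_0, A_1, A_2$; each $A_j$ is again polystochastic of order $3$, and $A_0 + A_1 + A_2 = J$. Expanding the permanent along the last coordinate gives
$${\rm per}\, A = \sum_{D} {\rm per}\, M^{D},$$
where $D$ runs over the diagonals of $I_3^{d-1}$ and $M^D$ is the $3\times 3$ matrix with entries $(A_j)_\beta$ for $\beta \in D$; each $M^D$ is nonnegative with all row sums equal to $1$. As every summand is nonnegative, it suffices to produce one $D$ for which $M^D$ has a positive transversal. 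By the induction hypothesis each slice $A_j$ has a positive diagonal, which makes one full column of the corresponding $M^D$ positive; the task is then to upgrade this to a genuine positive transversal of some $M^D$ by a Hall/deficiency argument, ruling out the two possible K\"onig obstructions (a zero column or a $2\times 2$ zero block) using the polystochasticity of all three slices at once. The absence of any exceptional matrix here reflects that order $3$ is odd.

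\emph{Order 4, $(0,1)$-case.} Via the correspondence in the introduction, a $d$-dimensional polystochastic $(0,1)$-matrix of order $4$ is a $(d-1)$-dimensional latin hypercube $Q$ of order $4$, and its permanent counts the transversals of $Q$; I would again induct on $d$, slicing into four symbol-layers that partition the index set. The engine is the special structure of order $4$: each two-dimensional section is a latin square isotopic to the Cayley table of $\mathbb{Z}_4$ or of $\mathbb{Z}_2 \times \mathbb{Z}_2$, and only the $\mathbb{Z}_4$-type behaviour can obstruct transversals. I would use the $\mathbb{Z}_2 \times \mathbb{Z}_2$ (parity) structure on the four symbols to build a transversal unless the slicing data are globally forced to be $\mathbb{Z}_4$-linear; in that case $A$ is equivalent to some $Z_4^d$, which by the Wanless construction (Proposition above) is transversal-free exactly when $d$ is odd. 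This isolates $Z_4^d$ with $d$ odd as the unique exception, consistent with the small-order data recorded earlier.

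The main obstacle is the combinatorial heart of items~2 and~3, namely the obstruction analysis. For order $3$ one must show that the zero-patterns permitted by K\"onig's theorem in the matrices $M^D$ cannot occur simultaneously for \emph{all} diagonals $D$, which requires exploiting the polystochasticity of the slices well beyond the mere existence of positive diagonals supplied by induction. For order $4$ the difficulty is to carry the square-level dichotomy uniformly across all dimensions and to prove that any failure of the inductive construction forces the globally linear matrix $Z_4^d$, rather than some sporadic transversal-free hypercube.
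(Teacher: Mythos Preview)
The paper does not prove this theorem: it is stated as a summary of prior results, each item attributed to an external reference (\cite{myobz} or \cite{myquasi}), with no argument supplied here. There is thus no in-paper proof to compare your proposal against.

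On its own merits: your order-$2$ argument is complete and correct, including the permanent formulas and the identification of the two exceptional $(0,1)$-matrices with $Z_2^d$ up to a hyperplane transposition. For orders $3$ and $4$, however, you have written plans rather than proofs, and you acknowledge this in your final paragraph. In item~2 the reduction to finding some diagonal $D$ whose $3\times 3$ row-stochastic matrix $M^{D}$ has a positive transversal is sound, but the ``Hall/deficiency argument'' that would rule out the K\"onig obstructions simultaneously across all $D$ is precisely the substance of the result and is not supplied; a single $M^{D}$ can certainly have permanent zero, so the global argument is unavoidable. In item~3 the $\mathbb{Z}_4$ versus $\mathbb{Z}_2\times\mathbb{Z}_2$ dichotomy on two-dimensional sections is a reasonable starting point, but the step ``any failure of the inductive construction forces the globally linear matrix $Z_4^d$'' is the main theorem of \cite{myquasi}, not a lemma one can gesture at. As written, items~2 and~3 remain genuine gaps.
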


The main result of the present note is a new supporting case for Conjecture~\ref{mainhyp}.

\begin{thm} \label{poly44}
The permanent of every $4$-dimensional polystochastic matrix of order $4$ is greater than zero. 
\end{thm}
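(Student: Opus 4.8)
The plan is to turn the statement into a finite amount of combinatorial checking. The starting point is the trivial but crucial remark that, since $A$ is nonnegative, $\mathrm{per}\,A=\sum_{p\in D(A)}\prod_{\alpha\in p}a_\alpha$ is a sum of nonnegative terms, so $\mathrm{per}\,A>0$ exactly when some diagonal $p$ consists entirely of positive entries; equivalently, when the support of $A$ (the $(0,1)$-matrix carrying a $1$ precisely where $a_\alpha>0$) contains a diagonal. Hence it suffices to show that the support of every $4$-dimensional polystochastic matrix of order $4$ contains a diagonal. If $B\le A$ entrywise then $\mathrm{supp}\,B\subseteq\mathrm{supp}\,A$, so I may replace $A$ by a polystochastic matrix $B\le A$ whose support is inclusion-minimal among the supports of all polystochastic matrices, and such a $B$ must be a vertex of the polytope $\Omega$ of $4$-dimensional polystochastic matrices of order $4$: if it were not, it would lie on an open segment inside the face $F=\{C\in\Omega:\ c_\alpha=0\ \text{whenever}\ b_\alpha=0\}$, and moving along that segment to the relative boundary of $F$ would make tight some constraint $c_\alpha\ge 0$ with $\alpha\in\mathrm{supp}\,B$, producing a polystochastic matrix with strictly smaller support. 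So it remains to prove that the support of every vertex of $\Omega$ contains a diagonal.

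The integral vertices of $\Omega$ are exactly the $4$-dimensional polystochastic $(0,1)$-matrices of order $4$, i.e. the characteristic matrices of the $3$-dimensional latin hypercubes of order $4$, and an integral matrix in $\Omega$ is automatically a vertex because all entries of a polystochastic matrix lie in $[0,1]$. For these the statement is already known: by the enumerations of McKay, McLeod and Wanless~\cite{wantrans} and of McKay and Wanless~\cite{cencus} every $3$-dimensional latin hypercube of order $n\le 6$ has a transversal — and in dimension $4$ the usual obstruction $Z_4^d$ does not occur because the dimension $4$ is even — so every integral vertex of $\Omega$ has a positive diagonal (compare also~\cite{myquasi}). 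Thus the whole difficulty is concentrated in the fractional vertices of $\Omega$.

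For the fractional vertices I would argue by classification. Fix a coordinate direction; the four hyperplanes of $A$ orthogonal to it are themselves $3$-dimensional polystochastic matrices of order $4$, and they sum to the all-ones $3$-dimensional matrix of order $4$; together with minimality of the support, these constraints sharply restrict which patterns of positive cells a fractional vertex can carry. Working modulo the equivalence generated by permuting hyperplanes within each of the four directions, one is then left with a finite list of candidate support patterns, and for each of them one exhibits a positive diagonal, either directly or by a short matching argument. A convenient equivalent formulation of this last step is to prove that the support of every vertex of $\Omega$ already contains the characteristic matrix of some $3$-dimensional latin hypercube of order $4$, after which the census of~\cite{wantrans,cencus} again finishes the proof. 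The hard part is precisely this control of the fractional vertices: $\Omega$ has dimension $(4-1)^4=81$ and very many facets, so a blind vertex enumeration is impractical, and the real work is a structural lemma that forces a minimal fractional support to be rigid enough that only a manageable number of cases survives; the two reductions above and the disposal of the integral vertices via known results are routine.
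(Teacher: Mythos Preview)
Your reduction to vertices of the polytope $\Omega$ is sound in spirit (though note a slip: a polystochastic $B$ with $B\le A$ entrywise forces $B=A$, since each line already sums to $1$; what you want is a polystochastic $B$ with $\mathrm{supp}\,B\subseteq\mathrm{supp}\,A$ of minimal support, and then your face argument correctly shows such $B$ is a vertex). The disposal of the integral vertices by citation is fine. The genuine gap is that for the fractional vertices you do not give a proof at all: you outline that one would classify minimal fractional supports up to equivalence and then exhibit a diagonal in each, and you explicitly acknowledge that ``the real work is a structural lemma'' you have not supplied. Nothing in your text bounds the number of cases, produces the promised structural lemma, or verifies the alternative claim that every vertex support contains the pattern of a $3$-cube; as written this is a programme, not a proof.

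The paper's argument is quite different and avoids the polytope entirely. It works directly with an arbitrary polystochastic $A$ of dimension~$4$ and order~$4$ and, assuming no positive diagonal, forces the positions of positive and zero entries step by step. The two auxiliary facts it uses are elementary: (i) up to equivalence there is a unique $4\times 3$ row-latin rectangle with no transversal, and changing any one of its symbols creates a transversal; (ii) in a doubly stochastic matrix of order~$4$ every positive partial diagonal of length~$2$ extends to one of length~$3$. Using Birkhoff on well-chosen $2$-dimensional planes, these two lemmas pin down enough entries of $A$ that a concrete positive diagonal such as $\{(0,0,0,0),(1,2,1,1),(2,3,2,2),(3,1,3,3)\}$ is eventually forced. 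No vertex enumeration, no case explosion --- the whole analysis fits on a page.
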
 

The following table summarizes all  known results on Conjecture~\ref{mainhyp}.
\begin{center}
\begin{tabular}{|c||c|c|c|c|c|c|c|c|c|c|}
\hline
$n \setminus d$ & 2 & 3 & 4 & 5 & 6 & 7 & 8 & $\ldots$ & $2k$ & $2k+1$ \\
\hline \hline
2 & ~~+~~ & \cellcolor{gray!50} & + & \cellcolor{gray!50} & + & \cellcolor{gray!50} ~~~~~ & + & $\ldots$ & + & \cellcolor{gray!50} \\
\hline
3 & + & + & + & + & + & + & + & $\ldots$ & + & + \\
\hline 
4 & + & \cellcolor{gray!50} & + & \cellcolor{gray!50} & $(0,1)$ & \cellcolor{gray!50} & $(0,1)$ & $\ldots$ & $(0,1)$ & \cellcolor{gray!50} \\
\hline
5 & + & $(0,1)$ & $(0,1)$ & $(0,1)$ & $(0,1)$ &  &  & $\ldots$ &  &  \\
\hline 
6 & + & \cellcolor{gray!50} & $(0,1)$ & \cellcolor{gray!50} &  & \cellcolor{gray!50} &  & $\ldots$ &  & \cellcolor{gray!50} \\
\hline
7 & + & $(0,1)$ &  &  &  &  &  & $\ldots$ &  &  \\
\hline 
8 & + & \cellcolor{gray!50} &  & \cellcolor{gray!50} &  & \cellcolor{gray!50} &  & $\ldots$ &  & \cellcolor{gray!50} \\
\hline
9 & + & $(0,1)$ &  &  &  &  &  & $\ldots$ &  &  \\
\hline 
10 & + & \cellcolor{gray!50} &  & \cellcolor{gray!50} &  & \cellcolor{gray!50} &  & $\ldots$ &  & \cellcolor{gray!50} \\
\hline
11 & + &  &  &  &  &  &  & $\ldots$ &  &  \\
\hline 
$\vdots$ & $\vdots$ & $\vdots$ & $\vdots$ & $\vdots$ & $\vdots$ & $\vdots$ & $\vdots$ & $\ddots$ & $\vdots$ & $\vdots$ \\
\hline
$2m$ & + & \cellcolor{gray!50} &  & \cellcolor{gray!50} &  & \cellcolor{gray!50} &  & $\ldots$ &  & \cellcolor{gray!50} \\
\hline
$2m+1$ & + &  &  &  &  &  &  & $\ldots$ &  &  \\
\hline 
\end{tabular}
\end{center}

\begin{center}
\textbf{Table 1.} 
Gray cells correspond to parameters for which there exist polystochastic matrices with zero permanent, ``$+$'' means that all polystochastic matrices of such dimension and order have a positive permanent, and ``$(0,1)$'' is used for cases when a proof of the conjecture is known only for polystochastic $(0,1)$-matrices. For empty cell parameters Conjecture~\ref{mainhyp} remains completely open. 
\end{center}

\section{Auxiliary lemmas}

A $k \times m$  \textit{row-latin rectangle} $R$ is a table with $k$ rows and $m$ columns filled by $m$ symbols in such a way so that each row contains all $m$ symbols. A \textit{transversal} in the rectangle $R$ is the set of $\min\left\{k,m\right\}$ entries hitting each row, each column and each symbol no more than once. Two row latin rectangles are said to be \textit{equivalent} if one can be put to the other by row, column and symbol permutations.

\begin{lem} \label{rectangle}
Up to equivalence, the row-latin rectangle
$$T = \begin{array} {ccc}
1 & 2 & 3  \\  1 & 2 & 3 \\   2 & 3 & 1  \\  2 & 3 & 1
\end{array}$$
is the unique $4 \times 3$ row-latin rectangle with no transversals. Moreover, if we change any symbol of this rectangle to other one, then we get a (not necessary row-latin) rectangle with a transversal.
\end{lem}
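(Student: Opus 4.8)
The plan is to reduce everything to the $3\times 3$ subrectangles of a $4\times 3$ row-latin rectangle and to classify those. First I would observe that a transversal of a $4\times 3$ row-latin rectangle $R$ consists of exactly three cells lying in three distinct rows, in all three columns, and carrying three distinct symbols; hence it is nothing but a transversal of one of the four $3\times 3$ subrectangles obtained by deleting a single row, and conversely every transversal of such a subrectangle is a transversal of $R$. Thus $R$ has no transversal precisely when all four of these $3\times 3$ subrectangles have none. The next step is to describe the $3\times 3$ row-latin squares without a transversal. Thinking of the three rows as permutations $\pi_1,\pi_2,\pi_3$ of $\{1,2,3\}$ and of a transversal as a $\gamma\in S_3$ with $i\mapsto\pi_i(\gamma(i))$ bijective, and using that row, column and symbol permutations replace $(\pi_1,\pi_2,\pi_3)$ by $(\theta\pi_{\tau(i)}\psi)_{i}$ for fixed $\theta,\psi\in S_3$ and a permutation $\tau$ of the rows, I would normalize one row to the identity and run through the few remaining cases. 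I expect the answer: a $3\times 3$ row-latin square has a transversal whenever its three rows are all equal or all pairwise distinct, while if exactly two rows coincide, say they are $\sigma,\sigma,\rho$, then a transversal exists if and only if $\sigma^{-1}\rho$ has a fixed point, i.e. is not a $3$-cycle (a transversal forces $\rho$ and $\sigma$ to agree in the column left uncovered by the two $\sigma$-rows).

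Assembling these facts yields the first assertion. If a $4\times 3$ row-latin rectangle $R$ has no transversal, then no three of its rows are pairwise distinct, so the multiset of its four rows has at most two distinct values; the row-types $\sigma,\sigma,\sigma,\sigma$ and $\sigma,\sigma,\sigma,\rho$ are impossible, since each leaves a $3\times 3$ subrectangle with three equal rows, so the rows of $R$ are $\sigma,\sigma,\rho,\rho$ with $\sigma\neq\rho$; and requiring the two subrectangles $\sigma,\sigma,\rho$ and $\sigma,\rho,\rho$ to have no transversal forces $\sigma^{-1}\rho$ (equivalently its inverse $\rho^{-1}\sigma$) to be a $3$-cycle. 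Conversely any such rectangle has no transversal; and applying a symbol permutation making $\sigma$ the identity and then a conjugation carrying the resulting $3$-cycle to the one occurring in $T$ shows that $R$ is equivalent to $T$. In particular $T$ itself has no transversal.

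For the ``moreover'' statement, let $R'$ arise from $T$ by replacing the entry of a cell $(i,j)$ with a different symbol $v'$. The key point is that every transversal of $R'$ must use the cell $(i,j)$: otherwise its three cells would all carry their original $T$-values and would form a transversal of $T$, which does not exist. It therefore suffices to exhibit, inside the $3\times 2$ subrectangle of $T$ on the rows other than $i$ and the columns other than $j$, two cells in distinct rows and in the two available columns carrying the symbols $\{1,2,3\}\setminus\{v'\}$ (if $v'$ happens to be a brand new symbol, any two distinct symbols will do, which is only easier). This is a finite verification, and the automorphisms of $T$ cut it down to a handful of representative cells, in each of which such a pair is readily found. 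The only genuine difficulty in the whole proof is the bookkeeping: carrying out the normalizations in the $3\times 3$ classification and the final $3\times 2$ search carefully enough that no case slips through.
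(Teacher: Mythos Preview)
Your proposal is correct and takes a genuinely different route from the paper. The paper simply enumerates all ten equivalence classes of $4\times 3$ row-latin rectangles, underlines a transversal in nine of them, and states that the second property is ``verified directly.'' You instead reduce to the four $3\times 3$ subrectangles, classify those via the permutation description of their rows, and reassemble; this explains \emph{why} $T$ is the unique obstruction (the condition that $\sigma^{-1}\rho$ be a $3$-cycle) rather than merely exhibiting it. Your observation for the ``moreover'' clause---that any transversal of the altered rectangle must pass through the altered cell, since otherwise it would already be a transversal of $T$---is a clean reduction that the paper does not make explicit. The trade-off is that the paper's enumeration is entirely mechanical and needs no auxiliary classification, whereas your argument requires actually carrying out the $3\times 3$ case analysis you only sketch (in particular the ``all three rows pairwise distinct'' case, which you flag with ``I expect the answer''); that case is indeed true and short, but you should verify it rather than assert it.
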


\begin{proof}
Let us list all $4 \times 3$ row-latin rectangles up equivalence:
$$\begin{array} {ccc} \textbf{\underline{1}} & 2 & 3  \\  1 & \textbf{\underline{2}} & 3 \\   1 & 2 & \textbf{\underline{3}}  \\  1 & 2 & 3 \end{array}~~~~~~
\begin{array} {ccc} \textbf{\underline{1}} & 2 & 3  \\  1 & \textbf{\underline{2}} & 3 \\   1 & 2 & \textbf{\underline{3}}  \\  1 & 3 & 2 \end{array}~~~~~~
\begin{array} {ccc} \textbf{\underline{1}} & 2 & 3  \\  1 & \textbf{\underline{2}} & 3 \\   1 & 2 & \textbf{\underline{3}}  \\  2 & 3 & 1 \end{array}~~~~~~
\begin{array} {ccc} 1 & 2 & 3  \\  \textbf{\underline{1}} & 2 & 3 \\   1 & \textbf{\underline{3}} & 2  \\  1 & 3 & \textbf{\underline{2}} \end{array}~~~~~~
\begin{array} {ccc} 1 & \textbf{\underline{2}} & 3  \\  1 & 2 & \textbf{\underline{3}} \\   \textbf{\underline{1}} & 3 & 2  \\  2 & 3 & 1 \end{array}
$$$$
\begin{array} {ccc} \textbf{\underline{1}} & 2 & 3  \\  1 & \textbf{\underline{2}}& 3 \\   1 & 3 & 2  \\  2 & 1 & \textbf{\underline{3}} \end{array}~~~~~~
\begin{array} {ccc} 1 & 2 & 3  \\  1 & 2 & 3 \\   2 & 3 & 1  \\  2 & 3 & 1 \end{array}~~~~~~
\begin{array} {ccc} \textbf{\underline{1}} & 2 & 3  \\  1 & 2 & 3 \\   2 & \textbf{\underline{3}} & 1  \\  3 & 1 & \textbf{\underline{2}} \end{array}~~~~~~
\begin{array} {ccc} \textbf{\underline{1}} & 2 & 3  \\  1 & \textbf{\underline{3}} & 2 \\   2 & 1 & 3  \\  3 & 1 & \textbf{\underline{2}} \end{array}~~~~~~
\begin{array} {ccc} 1 & \textbf{\underline{2}} & 3  \\  \textbf{\underline{1}} & 3 & 2 \\   2 & 1 & \textbf{\underline{3}}  \\  3 & 2 & 1 \end{array}
$$
For each row-latin rectangle, except for the rectangle $T$, a transversal is underlined. The second property of the rectangle $T$ is verified directly.
\end{proof}

\begin{lem} \label{partdiag}
In a doubly stochastic matrix of order $4$ every positive partial diagonal of length $2$ can be extended to a positive partial diagonal of length $3$.
\end{lem}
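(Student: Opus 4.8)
The plan is to reduce to a single canonical configuration by symmetry, and then derive a contradiction from an elementary mass-counting argument on line sums. Row permutations and column permutations map doubly stochastic matrices to doubly stochastic matrices and preserve the set of positive cells, and they can send any pair of cells that are pairwise distinct in both coordinates to the pair $\{(0,0),(1,1)\}$. So I would assume without loss of generality that the given positive partial diagonal of length $2$ occupies the cells $(0,0)$ and $(1,1)$, i.e. $a_{00}>0$ and $a_{11}>0$, and the goal becomes: find a positive entry in the complementary $2\times 2$ block $B$ lying in rows $\{2,3\}$ and columns $\{2,3\}$. Any positive entry $a_{i_3 j_3}$ of $B$ has $i_3,j_3\in\{2,3\}$, hence is distinct in both coordinates from $(0,0)$ and $(1,1)$, so $\{(0,0),(1,1),(i_3,j_3)\}$ is the required positive partial diagonal of length $3$.

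To produce the positive entry in $B$, I would argue by contradiction: suppose every entry of $B$ equals $0$. Then for each $i\in\{2,3\}$ the row-sum condition forces $a_{i0}+a_{i1}=\sum_{j=0}^{3}a_{ij}=1$, so $\sum_{i\in\{2,3\}}(a_{i0}+a_{i1})=2$. On the other hand, adding the column sums for columns $0$ and $1$ gives $\sum_{i=0}^{3}(a_{i0}+a_{i1})=2$. Subtracting, $\sum_{i\in\{0,1\}}(a_{i0}+a_{i1})=0$, and by nonnegativity every term vanishes; in particular $a_{00}=0$, contradicting $a_{00}>0$. Hence $B$ cannot be identically zero, which completes the proof.

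There is no genuinely hard step here; the only thing to get right is the bookkeeping showing that the assumption ``$B\equiv 0$'' confines all the mass of rows $2$ and $3$ to columns $0$ and $1$, thereby overloading those two columns and emptying rows $0$ and $1$ there — in particular emptying the cell $(0,0)$. I would note in passing that the same computation forces $a_{11}=0$ as well, so the conclusion is insensitive to which endpoint of the partial diagonal one invokes, and that the argument uses only that the matrix is nonnegative with all line sums equal to a fixed constant.
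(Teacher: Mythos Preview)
Your proof is correct and follows essentially the same approach as the paper. Both arguments reduce (implicitly or explicitly) to the canonical partial diagonal $\{(0,0),(1,1)\}$, observe that non-extendability forces the complementary $2\times2$ block to vanish, and then use a mass-counting/inclusion--exclusion argument on the row and column sums to conclude that the $2\times2$ block containing the original partial diagonal must also vanish, contradicting positivity; your version spells out the line-sum computation a bit more explicitly than the paper does.
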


\begin{proof}
Assume that $A$ is a doubly stochastic matrix of order $4$  and $p$ is a positive partial diagonal of length $2$ that cannot be extended to a positive partial diagonal of length $3$. Then all positive entries of the matrix $A$ share a row or a column with at least one of elements of $p$. Equivalently, all positive entries of $A$  can be covered by exactly two rows and columns. Since the sum of entries in each row and each column is exactly $1$, we have that in the intersection of these rows and columns all entries are zero: a contradiction with positivity of $p$. 
\end{proof}

\section{Proof of Theorem~\ref{poly44}}

\begin{proof}
Let us try to construct  a 4-dimensional polystochastic matrix $A$ of order 4 with a zero permanent. The construction takes four steps. 

\textbf{Step 1.} Without loss of generality, assume that entry $a_{0,0,0,0}$ is greater than zero.

Consider the $2$-dimensional plane $B$ composed of indices of the form $(*,*,0,0)$, where $*$ means arbitrary symbol from $\left\{0, \ldots, 3\right\}$. The matrix $B$ is doubly stochastic, so by the Birkhoff theorem, it contains a positive diagonal. Without loss of generality, let entries of the matrix $A$ with indices $(i,i,0,0)$, $i \in \left\{0, \ldots, 3\right\}$ be positive.

\textbf{Step 2.} Let us denote by $B_i$ the 2-dimensional planes of $A$ composed of indices $(i,i,*,*)$. As before, each $B_i$ is a doubly stochastic matrix. Assume that $p_i = \left\{(i,i,\beta_i^j,\gamma_i^j)\right\}_{j=1}^4$ is a  positive diagonal in the matrix $B_i$ containing index $(i,i,0,0)$. Consider the $4 \times 3$ rectangle $R$ for which an entry in a $(i+1)$-th row and in a $\beta_i^j$-th column is equal to $\gamma_i^j$. It is not hard to observe that $R$ is a row-latin rectangle and that each transversal in $R$ gives a positive diagonal in the matrix $A$.

By Lemma~\ref{rectangle}, rectangle $T$ is the unique up to equivalence $4 \times 3$ row-latin rectangle with no transversals. Moreover, changing any symbol of $T$ produces a transversal. So we may assume that entries of the matrix $A$ with the following indices obtained from the rectangle $T$
\begin{gather*}
(0,0,0,0),~(0,0,1,1),~(0,0,2,2),~(0,0,3,3), \\
(1,1,0,0),~(1,1,1,1),~(1,1,2,2),~(1,1,3,3), \\
(2,2,0,0),~(2,2,1,2),~(2,2,2,3),~(2,2,3,1), \\
(3,3,0,0),~(3,3,1,2),~(3,3,2,3),~(3,3,3,1)
\end{gather*}
 are positive and that for all other indices of the form $(i,i,\beta, \gamma)$, where $i \in \left\{0, \ldots, 3\right\}$ and $\beta, \gamma \in \left\{1, 2, 3\right\}$, the entries of $A$ are equal to zero.

\textbf{Step 3.} For $k  \in \left\{1,2,3\right\}$ denote by $C_k$ the $2$-dimensional planes of $A$ composed of indices $(*,*,k,k)$. Note that the doubly stochastic matrices $C_k$ contain positive partial diagonals of length 2 formed by indices $(0,0,k,k)$ and $(1,1,k,k)$. By Lemma~\ref{partdiag},  each of these diagonals can be extended to a positive partial diagonal of length 3 by new indices $(\mu_k, \nu_k,k,k)$, where $\mu_k, \nu_k \in \left\{2,3\right\}$ and $\mu_k \neq \nu_k$. 

If for some $k_1, k_2$ it holds $\mu_{k_1} = \nu_{k_2} = 2$ and $\mu_{k_2} = \nu_{k_1} = 3$ then we have a positive diagonal 
$$\left\{(0,0,0,0), (1,1, k_3, k_3), (\mu_{k_1}, \nu_{k_1}, k_1, k_1), (\mu_{k_2}, \nu_{k_2}, k_2, k_2)\right\}$$
in the matrix $A$, where $k_3 \neq k_2, k_1$.

Therefore, the last remaining possibility for $A$ do not have a positive diagonal  is that for each $k \in \left\{1,2,3\right\}$ all entries with indices $(2,3, k, k)$ are positive and all entries with indices $(3,2, k, k)$ are zero (or vice versa). 

\textbf{Step 4.} For each $k \in \left\{1,2,3\right\}$ consider the line composed of indices of the form $(*,2,k,k)$. Note that this line contains two zero entries, namely entries with indices $(2,2,k,k)$ and $(3,2,k,k)$. If we suppose that an entry with index $(1,2,k,k)$ is equal to zero too, we obtain a contradiction with polystochaticity of the matrix $A$, because in this case we have  $a_{0,2,k,k} = 1$ and $a_{0,0,k,k} >0$. Therefore, all entries $a_{1,2,k,k}$ are greater than zero. By similar reasoning, we have that for each $k \in \left\{1,2,3\right\}$ entries $a_{3,1,k,k}$ are positive. But then the matrix $A$ has a positive diagonal, for example:
$$\left\{ (0,0,0,0), (1,2,1,1), (2,3,2,2), (3,1,3,3) \right\}.$$ 

$$\begin{array} {cccc|cccc|cccc|cccc}
+_1 & . & . & .   &   . & . & . & .   &   . & . & . & .   &   . & . & . & . \\
. & +_2 & 0_2 & 0_2   &   . & . & . & .   &   . & . & . & .   &   . & . & . & . \\
. & 0_2 & +_2 & 0_2   &   . & . & . & .   &   . & . & . & .   &   . & . & . & . \\
. & 0_2 & 0_2 & +_2   &   . & . & . & .   &   . & . & . & .   &   . & . & . & . \\
\hline
. & . & . & .   &   +_1 & . & . & .   &   . & . & . & .   &   . & . & . & . \\
. & . & . & .   &   . & +_2 & 0_2 & 0_2   &   . & +_4 & . & .   &   . & . & . & . \\
. & . & . & .   &   . & 0_2 & +_2 & 0_2   &   . & . & +_4 & .   &   . & . & . & . \\
. & . & . & .   &   . & 0_2 & 0_2 & +_2   &   . & . & . & +_4   &   . & . & . & . \\
\hline
. & . & . & .   &   . & . & . & .   &   +_1 & . & . & .   &   . & . & . & . \\
. & . & . & .   &   . & . & . & .   &   . & 0_2 & +_2 & 0_2   &   . & +_3 & . & . \\
. & . & . & .   &   . & . & . & .   &   . & 0_2 & 0_2 & +_2   &   . & . & +_3 & . \\
. & . & . & .   &   . & . & . & .   &   . & +_2 & 0_2 & 0_2   &   . & . & . & +_3 \\
\hline
. & . & . & .   &   . & . & . & .   &   . & . & . & .   &   +_1 & . & . & . \\
. & . & . & .   &   . & +_4 & . & .   &   . & 0_3 & . & .   &   . & 0_2 & +_2 & 0_2 \\
. & . & . & .   &   . & . & +_4 & .   &   . & . & 0_3 & .   &   . & 0_2 & 0_2 & +_2 \\
. & . & . & .   &   . & . & . & +_4   &   . & . & . & 0_3   &   . & +_2 & 0_2 & 0_2 \\
\end{array}$$

\begin{center}
\textbf{Table 2.} The 4-dimensional matrix $A$ of order 4 after the last step.

``$+$'' denotes a positive entry, ``$0$'' is a zero entry, dots are used for insignificant entries. Subscripts indicate steps on which entries are specified. 
\end{center}

\end{proof}

\end{document}